\tikzset{font=\small}
\newtheorem{theorem}{Theorem} [section]
\newtheorem{lemma}[theorem]{Lemma}
\newtheorem{corollary}[theorem]{Corollary}
\newtheorem{proposition}[theorem]{Proposition}
\theoremstyle{definition}
\newtheorem{problem}[theorem]{Problem}
\newtheorem{example}[theorem]{Example}
\newtheorem{remark}[theorem]{Remark}
\newtheorem{definitions}[theorem]{Definitions}
\subjclass[2010]{20M18, 20M10, 20M15}
\keywords{Ample semigroup, Inverse semigroup, Brandt Semigroup, Embedding, Dominion}
\numberwithin{equation}{section}
\author{Nasir Sohail, Aftab Hussain Shah, Kristo Väljako}   
\address{Institute of Mathematics and Statistics, University of Tartu,
Tartu, Estonia}
\email{nasir.sohail@ut.ee} 
\address{Department of Mathematics, Central University of Kashmir, Ganderbal, Jammu \& Kashmir, India }
\email{aftab@cukashmir.ac.in}
\address{Institute of Mathematics and Statistics, University of Tartu,
Tartu, Estonia \newline
\indent Institute of Computer Science, University of Tartu,
Tartu, Estonia}
\email{kristo.valjako@ut.ee}
\title{Embedding ample semigroups as $(2,1,1)$-subalgebras of inverse semigroups}
\thanks{This research is being supported by the Estonian Research Council grant PRG1204.}
\begin{document}

\begin{abstract}  
In general, the problem of embedding an ample semigroup in an inverse semigroup as a $(2,1,1)$-type subalgebra is known to be undecidable. An affirmative answer to the problem is provided for certain classes of finite ample semigroups. We also give examples of semigroups that are left (respectively, right) but not right (respectively, left) ample. 
\end{abstract}

\maketitle

\section{Motivation and preliminaries}
Let $X$ be a (possibly empty) set. By a \textit{partial bijection} of $X$ we mean a bijection $\beta: A\longrightarrow B$ such that $A$ and $B$, called, respectively, the \textit{domain} and \textit{image} of $\beta$, are subsets of $X$---the empty map is a partial bijection having empty domain and image. We shall denote $A$ and $B$ by $Dom\beta$ and $Im\beta$, respectively. The set of all partial bijections of $X$ is denoted by $\mathcal{I}_X$.\vspace{2pt}

Throughout this article, the maps will be written to the right of their arguments. Accordingly, $\phi \circ \psi$, viz. a composition of full maps, will mean \lq $\phi$ followed by $\psi$'. Also, for better readability, parentheses around the arguments will be sometimes dropped if there is no chance of ambiguity. For all $\beta, \gamma \in \mathcal{I}_X$, we define
\begin{equation}\label{Comp1}
\beta \cdot \gamma = \beta|_{(Im\beta \cap Dom \gamma)\beta^{-1}} \circ \gamma \in \mathcal{I}_X,
\end{equation}
where $\beta^{-1}:Im \beta \longrightarrow Dom \beta$ denotes the inverse of $\beta$. The binary operation (\ref{Comp1}) turns $\mathcal{I}_X$ into a semigroup, called the \textit{symmetric inverse semigroup over $X$}. As usual, we shall simply write $\beta \gamma$ instead of $\beta \cdot \gamma$. The  \textit{empty map}, which will be denoted by $0$, acts as the zero element of $\mathcal{I}_X$. Actually, $\mathcal{I}_X$ is a monoid whose identity is the (full) identity map $1_X$ on $X$.  Note that for every $\beta \in \mathcal{I}_X$ there exists a unique partial bijection $\beta^{-1} \in \mathcal{I}_X$, namely the inverse of $\beta$, such that $\beta \beta^{-1}$ and $\beta^{-1} \beta$ are identities on $Dom \beta$ and $Im\beta$, respectively. Particularly, $\beta \beta^{-1}\beta= \beta$ and $\beta^{-1}\beta\beta^{-1}= \beta^{-1}$.\vspace{2pt}

In general, an element $a$ of a semigroup $S$ is said to be \textit{invertible} if there exists a unique element $a^{-1} \in S$, called the \textit{inverse} of $a$,  such that $aa^{-1}a=a$ and $a^{-1}aa^{-1}=a^{-1}$. We call $S$ an \textit{inverse semigroup} if all of its elements are invertible. The semigroup $\mathcal{I}_X$, discussed above, is a typical example of an inverse semigroup. In what follows, the letter $T$, possibly with subscripts, will be reserved to only denote inverse semigroups.\vspace{2pt}

A \textit{morphism} $h:T_1 \longrightarrow T_2$ of inverse semigroups is just a semigroup homomorphism from $T_1$ to $T_2$. \textit{Monomorphisms} and \textit{isomorphisms} of (inverse) semigroups are, respectively, the injective and bijective homomorphisms.   The Wagner-Preston representation (see Theorem \ref{Wagnr-Prstn} below) asserts that every inverse semigroup can be embedded in a symmetric inverse semigroup.

\begin{theorem}\label{Wagnr-Prstn}
    Let $T$ be an inverse semigroup. Then, for all $x\in T$ the map
    \[
    \rho_x:Txx^{-1}\longrightarrow Tx^{-1}x,
    \]
    givne by
    \[
    (a)\rho_x = ax, \,\, \forall\, a \in Txx^{-1},
    \]
is a partial bijection of $T$. Furthermore, the map
\[
\rho:T\longrightarrow \mathcal{I}_T, \text{ given by }\, x \longmapsto \rho_x,\,\, \forall\, x \in T,
\]
is a monomorphism.
\end{theorem}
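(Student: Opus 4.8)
The plan is to follow the classical route to the Wagner--Preston theorem, organised into three parts: (i) each $\rho_x$ is a well-defined partial bijection of $T$; (ii) $\rho_T$ is multiplicative; (iii) $\rho_T$ is injective. Throughout I would use only two standard facts about an inverse semigroup: its idempotents commute, and for an idempotent $e$ the left ideal $Te$ has $e$ as a right identity, i.e. $ae=a$ whenever $a\in Te$ (hence $Te\cap Tf=Tef=Tfe$ for idempotents $e,f$, since $s\in Te\cap Tf$ forces $s=se=sf=sef$, while $Tef\subseteq Te\cap Tf$ follows by commuting $e$ and $f$). The identities $xx^{-1}x=x$, $x^{-1}xx^{-1}=x^{-1}$ and $(xy)^{-1}=y^{-1}x^{-1}$ will also be invoked freely.

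For (i): if $a\in Txx^{-1}$ then $a=axx^{-1}$, so $(ax)(x^{-1}x)=a(xx^{-1}x)=ax$ shows $ax\in Tx^{-1}x$; thus $\rho_x$ really maps $Txx^{-1}$ into $Tx^{-1}x$. It is injective because $ax=bx$ with $a,b\in Txx^{-1}$ yields $a=axx^{-1}=bxx^{-1}=b$, and it is onto because for $c\in Tx^{-1}x$ we have $cx^{-1}\in Txx^{-1}$ (as $cx^{-1}\cdot xx^{-1}=cx^{-1}$) with $(cx^{-1})\rho_x=cx^{-1}x=c$. So $\rho_x$ is a bijection from $Txx^{-1}$ onto $Tx^{-1}x$, i.e. a genuine element of $\mathcal{I}_T$; the same computation identifies $\rho_x^{-1}$ with $\rho_{x^{-1}}$, namely right multiplication by $x^{-1}$ on $Tx^{-1}x$.

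Step (ii) is the delicate one, because establishing $\rho_x\rho_y=\rho_{xy}$ in $\mathcal{I}_T$ demands that the \emph{domains} agree, not merely that the rules coincide where both sides happen to be defined. By \eqref{Comp1}, $Dom(\rho_x\rho_y)=(Im\rho_x\cap Dom\rho_y)\rho_x^{-1}=(Tx^{-1}x\cap Tyy^{-1})\rho_x^{-1}$. Commutativity of idempotents gives $Tx^{-1}x\cap Tyy^{-1}=Tx^{-1}xyy^{-1}$, and applying $\rho_x^{-1}$ (right multiplication by $x^{-1}$) turns this into $Tx^{-1}xyy^{-1}x^{-1}$. It then remains to check the identity $Tx^{-1}xyy^{-1}x^{-1}=Txyy^{-1}x^{-1}$, whose two inclusions come from left-multiplying a generic element by $x^{-1}$, respectively by $x$, and using $xx^{-1}x=x$; and since $Txyy^{-1}x^{-1}=T(xy)(xy)^{-1}=Dom\rho_{xy}$, the domains match. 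Finally, for $a$ in this common domain, $(a)\rho_x\rho_y=(ax)\rho_y=axy=(a)\rho_{xy}$, so $\rho_T$ is a homomorphism.

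For (iii): if $\rho_x=\rho_y$ then in particular $Dom\rho_x=Dom\rho_y$, i.e. $Txx^{-1}=Tyy^{-1}$; since each idempotent acts as a right identity on its own left ideal, $xx^{-1}=xx^{-1}yy^{-1}=yy^{-1}$. Writing $e$ for this common idempotent, we have $e\in Dom\rho_x=Dom\rho_y$, and evaluating both maps at $e$ gives $x=xx^{-1}x=(e)\rho_x=(e)\rho_y=yy^{-1}y=y$. Hence $\rho_T$ is a monomorphism. I expect the only real work to be the domain bookkeeping in step (ii); everything else is a short manipulation of the defining identities.
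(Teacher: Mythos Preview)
Your proof is correct and is precisely the classical Wagner--Preston argument. The paper itself does not supply a proof of this theorem but simply refers the reader to \cite{Howie}, Theorem~5.1.7; your write-up is essentially that standard proof, so there is nothing to compare.
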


\begin{proof}
    See, for instance, \cite{Howie} Theorem 5.1.7.
\end{proof}

\begin{theorem}\label{Wagnr-Prstn_D}
    Given an inverse semigroup $T$, the map
    \[
    {\lambda_x}:x^{-1}xT\longrightarrow xx^{-1}T,
    \]
    defined by
    \[
    (a){\lambda_x} = xa,\,\, \forall \, a \in x^{-1}xT,
    \]
is a partial bijection of $T$ for all $x\in T$. Also, the map
\[
\lambda:T\longrightarrow \mathcal{I}_T, \text{ defined as }\, x \longmapsto {\lambda_x},\,\,\forall\, x\in T,
\]
is an anti-monomorphism; i.e. $\lambda$ is injective and $(xy)\lambda = (y\lambda)(x\lambda)$, for all $x,y \in T$.
\end{theorem}

\begin{proof}
    Compare, for instance, with \cite{Lawson1998} Theorem 1.5.1---the \lq isomorphism' is replaced with \lq anti-isomorphism' because, contrary to \cite{Lawson1998}, we are writing maps to the right of their arguments.
\end{proof}
 
\noindent Clearly, the map $\vartheta:(T)\rho \longrightarrow (T)\lambda$, defined by $\rho_x \longmapsto {\lambda_x}$, for all $x \in T$, is an anti-isomorphism, where $\rho$ and $\lambda$ are maps given by Theorems \ref{Wagnr-Prstn} and \ref{Wagnr-Prstn_D}, respectively. One can also easily observe that
\begin{equation*}\label{DomsandImgs}
    Txx^{-1} = Tx^{-1},\; Tx^{-1}x = Tx,\; x^{-1}xT = x^{-1}T, \text{ and }\; xx^{-1}T= xT, \, \forall \, x \in T.
\end{equation*}

As usual, the set of idempotents of a semigroup $S$ will be denoted by $E(S)$. For an inverse semigroup $T$, we have
\[
E(T) =\{xx^{-1}: x \in T\}.
\]
Actually, $E(T)$ is a subsemilattice of $T$. Identifying $T$ with its isomorphic copy in $\mathcal{I}_T$ we see that $xx^{-1}, x^{-1}x\in E(T)$ are, respectively, the identities on the domain and image of $x\in T$. This fact may be used to verify that the conditoin
\begin{equation}\label{Eq1.1}
    \forall\, x,y\in T,\, x \leq y \text{ iff } x=xx^{-1}y
\end{equation}
defines a partial order on $T$---one may interpret (\ref{Eq1.1}) as $x \leq y$ if and only if the domain of $x$ is a subset of the domain of $y$ and the latter agrees with the former on the restricted domain. It is an easy exercise to show that Condition (\ref{Eq1.1}) may equivalently be stated as:
\begin{equation*}\label{Eq1.2}
    \forall\, x,y\in T,\, x \leq y \text{ iff }\, \exists\, e\in E(T), \text{ such that }x=ey.
\end{equation*}
 We call $\leq$ the \textit{natural partial order} on $T$. In fact, for any semigroup $S$ the set $E(S)$ comes equipped with the partial order:
\begin{equation}\label{Eq1.0}
    e \preccurlyeq f \,\text{ iff }\, ef = fe = e,\; \forall\, e,f \in E(S).
\end{equation}
The natural partial order of an inverse semigroup $T$ is an extension of the partial order $\preccurlyeq$ on $E(T)$. In the sequel, \textit{order} on an inverse semigroup will mean the natural partial order, as defined by (\ref{Eq1.1}).\vspace{2pt}

Given an inverse semigroup $T$, let us further recall (\cite{NS}, Remark 4.3) that for all $x\in T$, $xx^{-1}$ and $x^{-1}x$ are the minimum idempotents such that $x=xx^{-1}x$, i.e. for all $e,f \in E(T)$, $ex = x$ implies that $xx^{-1} \leq e$ and $xf = x$ implies that $x^{-1}x \leq f$.\vspace{2pt}

A semigroup $S$ is called \textit{left ample} if it can be embedded in an inverse semigroup $T$ such that $(x\phi)(x\phi)^{-1} \in S\phi$ for all $x\in S$, where $\phi$ is the embedding of $S$ into $T$. We then denote
\begin{equation}\label{Identification}
xx^{-1} := [(x\phi')(x\phi')^{-1}]\phi'^{-1} \in S, \, \forall \, x \in S,
\end{equation}
where $\phi':S\longrightarrow S\phi$ is the isomorphism defined by $x\longmapsto x\phi$. Thus, a left ample semigroup comes equipped with the unary operation $x \longmapsto x^+ := xx^{-1}$. Similarly, a semigroup $S$ is called \textit{right ample} if there exists an inverse semigroup $T$ admitting a monomorphism $\phi:S \longrightarrow T$ such that $(x\phi)^{-1}(x\phi) \in S\phi$ for all $x \in S$. We use the notation
\[
x^{-1}x := \left[(x\phi')^{-1}(x\phi')\right]\phi'^{-1} \in S
\]
for all $x\in S$, where $\phi'$ is the isomorphism considered above. A right ample semigroup is endowed with the unary operation $x \longmapsto x^* := x^{-1}x$. We shall refer to $T$ as an \textit{associated inverse semigroup} of the (left, right) ample semigroup $S$. If $S$ is a subsemigroup of $T$ then we shall say that $S$ is (\textit{left}, \textit{right}) \textit{ample in $T$}. One may easily verify that $S$ is left (respectively, right) ample in $T$ if and only if $(S)\rho$ is left (respectively, right) ample in $\mathcal{I}_T$, where $\rho$ is the monomorphisms given by Theorem \ref{Wagnr-Prstn}.\vspace{2pt}

A semigroup $S$ is called \textit{ample} if it has (possibly different) associated inverse semigroups $T_1$ and $T_2$ making it, respectively, into a left and a right ample semigroup. We shall say that $S$ is \textit{ample in} $T$ if it is both left and right ample in $T$.  Obviously, every inverse semigroup is ample (in itself). Also, a subsemigroup $S$ of an inverse semigroup $T$ is ample in $T$ if $E(T) \subseteq S$, that is, if $S$ is \textit{full} in $T$. The converse is not true; for example, $\mathbb{N}$ is ample but not full in the multiplicative monoid $\mathbb{Q}$. For more examples of (left, right) ample semigroups the reader is referred to \cite{Gould} and \cite{NS}, and the references contained therein. For a general background on inverse semigroups one may refer to \cite{Howie} or \cite{Lawson1998},

\vspace{2pt}
From the universal algebraic perspective, a left ample semigroup may be considered as a $(2,1)$-type algebra $(S,\cdot, ^+)$. Similarly, a right ample semigroup can be viewed as a $(2,1)$-type algebra $(S,\cdot, ^*)$. An ample semigroup may then be treated as a $(2,1,1)$-type algebra $(S, \cdot, ^+, ^*)$. Particularly, every inverse semigroup is an algebra of the latter type; and by saying that $S$ is left (right) ample in $T$ we precisely mean that $S$ is $(2,1)$-type subalgebra of $T$. Consequently, $S$ is ample in $T$ if and only if it is a $(2,1,1)$-type subalgebra of $T$.

\begin{problem}[\textbf{The embedding problem}]\label{Prblm}
Let $T_1$ and $T_2$ be different inverse semigroups containing isomorphic copies, say $S_1$ and $S_2$, of a non-inverse semigroup $S$. Let $S_1$ be left ample in $T_1$ and $S_2$ be right ample in $T_2$ (whence $S$, being an ample semigroup, is a $(2,1,1)$-type algebra).  Then, can we find an inverse semigroup $T$ containing an isomorphic copy of $S$ that is left as well as right ample in $T$? In terms of universal algebra, this amounts to embedding $S$ in an inverse semigroup as a $(2,1,1)$-type subalgebra.
\end{problem}

\noindent The following results imply that, in general, Problem \ref{Prblm} is undecidable.

\begin{theorem}[\cite{Gould and Kambites}, Theorem 3.4]\label{GK1}
    Let $S$ be an ample semigroup. Then $S$ is a $(2,1,1)$-type subalgebra of an inverse semigroup if and only if $S$ is a full subsemigroup of an inverse semigroup.
\end{theorem}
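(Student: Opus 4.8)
The plan is to establish both implications, the forward one being the substantive direction. For the implication ``full $\Rightarrow$ $(2,1,1)$-subalgebra'', suppose $S$ is a full subsemigroup of an inverse semigroup $T$, so that $E(T)\subseteq S$. Then for every $x\in S$ the idempotents $xx^{-1}$ and $x^{-1}x$ of $T$ already lie in $S$, whence $S$ is closed under the unary operations $x\mapsto xx^{-1}$ and $x\mapsto x^{-1}x$ inherited from $T$, i.e.\ $S$ is a $(2,1,1)$-subalgebra of $T$. The one point needing care is that these inherited operations coincide with the intrinsic operations $^{+}$ and $^{*}$ that $S$ carries as an abstract ample semigroup. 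I would deduce this from the fact (recalled in the excerpt via \cite{NS}, Remark 4.3) that $x^{+}$ is the least idempotent $e$ with $ex=x$ and $x^{*}$ the least idempotent $f$ with $xf=x$, together with the observation that fullness forces $E(S)=E(T)$: indeed $E(S)\subseteq E(T)$ always, and $E(T)\subseteq S$ consists of elements idempotent in $S$, so $E(T)\subseteq E(S)$. Hence the relevant minima are taken over the same semilattice whether computed inside $S$, inside $T$, or inside any other inverse semigroup in which $S$ is full, so the embedding respects $^{+}$ and $^{*}$.

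For the converse, suppose $S$ is a $(2,1,1)$-subalgebra of an inverse semigroup $T$; identifying $S$ with its image, this says $S\leq T$ with $x^{+}=xx^{-1}\in S$ and $x^{*}=x^{-1}x\in S$ for all $x\in S$. I would set $T':=\langle S\rangle$, the inverse subsemigroup of $T$ generated by $S$ (an inverse semigroup, containing $S$ as a subsemigroup), and show that $S$ is full in $T'$, that is, $E(T')\subseteq S$. Since $T'$ is inverse, $E(T')=\{ww^{-1}:w\in T'\}$, and every $w\in T'$ has the form $w=s_{1}^{\varepsilon_{1}}\cdots s_{n}^{\varepsilon_{n}}$ with $s_{i}\in S$ and $\varepsilon_{i}\in\{1,-1\}$. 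I then claim $ww^{-1}\in E(S)$ by induction on $n$. The case $n=1$ is immediate ($ww^{-1}$ equals $s_{1}^{+}$ or $s_{1}^{*}$). For the step, write $w=s_{1}^{\varepsilon_{1}}w'$ with $w'=s_{2}^{\varepsilon_{2}}\cdots s_{n}^{\varepsilon_{n}}$ and $e:=w'(w')^{-1}\in E(S)$ by induction; then $ww^{-1}=s_{1}^{\varepsilon_{1}}e\,s_{1}^{-\varepsilon_{1}}$. If $\varepsilon_{1}=1$ this equals $(s_{1}e)(s_{1}e)^{-1}=(s_{1}e)^{+}$ (using $ee^{-1}=e$), and if $\varepsilon_{1}=-1$ it equals $(es_{1})^{-1}(es_{1})=(es_{1})^{*}$. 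As $s_{1}e,es_{1}\in S$ and $S$ is closed under $^{+}$ and $^{*}$, in either case $ww^{-1}\in E(S)\subseteq S$, finishing the induction, hence $E(T')\subseteq S$.

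The step I expect to be the real content is precisely this inductive step: it is here that one must use that $S$ is ample rather than merely left or right ample. Conjugating an idempotent $e\in E(S)$ by a generator $s_{1}$ produces $s_{1}es_{1}^{-1}$, which a priori need not lie in $S$, and the only reason it does is that it can be rewritten as $(s_{1}e)^{+}$; the mirror computation handling the inverse generators $s_{1}^{-1}$ forces the use of $^{*}$. Thus both one-sided structures are needed, and together they are exactly what guarantees that passing to the generated inverse subsemigroup introduces no new idempotents. With the two implications in place the theorem follows; combined with the separately established undecidability of membership in the class of semigroups that embed as full subsemigroups of inverse semigroups, it yields the undecidability of Problem \ref{Prblm}.
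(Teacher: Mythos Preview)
The paper does not supply its own proof of this theorem; it is quoted from \cite{Gould and Kambites} (their Theorem~3.4) as background for the undecidability discussion, so there is nothing in the present paper to compare your argument against.

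That said, your proof is correct and self-contained. For the forward direction, passing to the inverse hull $T'=\langle S\rangle$ and showing $E(T')\subseteq S$ by induction on word length is the natural route; the identities $s\,e\,s^{-1}=(se)(se)^{-1}=(se)^{+}$ and $s^{-1}e\,s=(es)^{-1}(es)=(es)^{*}$ are precisely what reduce conjugation of an idempotent of $S$ by a generator (or its inverse) to an application of $^{+}$ or $^{*}$, and this is exactly why two-sided ampleness is needed. Your treatment of the backward direction is also sound: the point that the intrinsic $^{+},^{*}$ on $S$ coincide with $x\mapsto xx^{-1}$ and $x\mapsto x^{-1}x$ computed in any inverse semigroup in which $S$ is full, via the ``minimum idempotent with $ex=x$'' characterisation and the equality $E(S)=E(T)$, is correct and worth making explicit, since ``full subsemigroup'' is a priori only a plain semigroup condition.
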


\begin{corollary}[\cite{Gould and Kambites}, Corollary 4.3]\label{GK2}
     It is undecidable whether a finite ample semigroup embeds as a full subsemigroup of a finite inverse semigroup, or of an inverse semigroup.
\end{corollary}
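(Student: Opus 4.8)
This is a known result; we indicate the strategy one would use, which combines Theorem~\ref{GK1} with an effective reduction from an already undecidable problem. By Theorem~\ref{GK1}, for an ample semigroup the property of being a $(2,1,1)$-subalgebra of an inverse semigroup coincides with that of being a full subsemigroup of an inverse semigroup; hence it suffices to show that full embeddability is undecidable for finite ample semigroups, and it is natural to look for a single reduction that settles both the finite-inverse-semigroup version and the arbitrary-inverse-semigroup version simultaneously.

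For the source of undecidability I would take a Markov--Adian--Rabin type statement about finitely presented semigroups or groups: that one cannot decide, from a finite presentation, whether the presented group is trivial, or whether a prescribed word equals the identity in a fixed group with unsolvable word problem, or whether a finitely presented (inverse) monoid is finite. From an instance of such a problem --- a finite alphabet, a finite list of defining relations, and possibly a test word --- one constructs, effectively, a \emph{finite} semigroup $S$ whose elements are assembled from these finitely many symbols: the generators contribute group-like elements, the relators are imposed as prescribed equalities between products, and enough idempotents of the form $x^{+}=xx^{-1}$ and $x^{*}=x^{-1}x$ are adjoined so that $S$ becomes ample --- left ample in one associated inverse semigroup and right ample in another, precisely the situation of Problem~\ref{Prblm}. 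The construction is to be arranged so that every inverse semigroup $T$ in which $S$ sits as a \emph{full} subsemigroup is forced, through the idempotents $T$ must contain, to realise exactly the (possibly infinite) group, or the collapse, determined by the presentation; then $S$ embeds fully in an inverse semigroup if and only if the original instance is positive. Taking a Brandt-type inverse semigroup as the ambient target is one convenient way to make fullness carry the defining relations into the maximal subgroups of $T$.

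The main obstacle is the design of this encoding, which must meet three requirements at once: (i) the finite semigroup $S$ produced has to be genuinely ample, so that Theorem~\ref{GK1} applies and $S$ is a legitimate instance of the full-embeddability problem; (ii) a positive instance must actually yield a full embedding, i.e.\ one has to exhibit a concrete inverse semigroup $T$ --- finite, in the finite version --- with $E(T)\subseteq S\subseteq T$; and (iii) a negative instance must admit \emph{no} full embedding at all, finite or infinite, which is exactly the point where one exploits that fullness forces all the defining relations to hold inside the maximal subgroups of $T$. Once (i)--(iii) are secured, the undecidability of the chosen source problem transfers immediately, yielding Corollary~\ref{GK2}; and since every $S$ produced by the reduction is an instance of Problem~\ref{Prblm}, Theorem~\ref{GK1} shows at the same time that Problem~\ref{Prblm} is undecidable for finite ample semigroups.
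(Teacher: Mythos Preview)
The paper itself gives no proof of this corollary: it is quoted verbatim as Corollary~4.3 of Gould and Kambites and used only as a black box. There is therefore nothing in the present paper to compare your argument against; any proof, including yours, goes beyond what the paper does.

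That said, your write-up is a programme rather than a proof. You correctly identify the shape of a reduction, but the entire content lies in the step you label ``the main obstacle'': producing, effectively from a presentation, a \emph{finite} ample semigroup $S$ with the property that $S$ embeds fully in an inverse semigroup iff the source instance is positive. Nothing you wrote constrains how this is to be done, and a Markov--Adian--Rabin slogan does not by itself supply such a construction. In the Gould--Kambites paper the route is different from the one you sketch: they pass through a correspondence between ample semigroups and small cancellative categories and reduce from the (already known to be undecidable) problem of embedding a finite cancellative monoid in a group, rather than encoding group presentations directly into Brandt-type semigroups. If you want your outline to stand on its own, you would need to exhibit the encoding explicitly and verify your conditions (i)--(iii); as it stands it is a plausible heuristic, not an argument.
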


\noindent In the next section we answer Problem \ref{Prblm} for certain classes of finite semigroups. Particularly, we prove that a finite ample semigroups $S$ with central idempotents is embeddable in $\mathcal{I}_S$ as a $(2,1,1)$-subalgebra. Then, in the last section, we provide examples of semigroups that are left (respectively, right) ample but not right (respectively, left) ample. (To the knowledge of the authors, no such example exists in the literature.)

\begin{remark}\label{Rmrk1.4}
    Let a semigroup $S$ be left (respectively, right) ample in an inverse semigroup $T_1$, and $T_2$ be an inverse semigroup admitting a homomorphism $f: T_1 \longrightarrow T_2$. Then one can easily verify that $(S)f$ is left (respectively, right) ample in $T_2$.
\end{remark}

\section{Answering the embedding problem for some finite ample semigroups}

 Let $S$ be a subsemigroup of an inverse semigroup $T$. Following \cite{Schein}, a subset $Y$ of $T$ will be called \textit{right $S$-invariant} if
\[
    Im(\rho_s|_{Dom\rho_s\cap Y}) \subseteq Y, \hspace{4pt} \text{for all } s \in S,
\]
where $\rho_s:Tss^{-1} \longrightarrow Ts^{-1}s$ is the partial bijection defined in Theorem \ref{Wagnr-Prstn}.
Denoting $\rho_s|_{Dom\rho_s\cap Y}$, equivalently $\rho_s|_{Tss^{-1}\cap Y}$, by $\sigma^{Y}_s$, we introduce a map:
\begin{equation}\label{Eq2.1}
\sigma^{Y}:S\longrightarrow \mathcal{I}_Y \text{ by } (s)\sigma^{_Y} =\sigma^{Y}_s.
\end{equation}
Clearly, $\sigma^{Y}$ is well-defined. Also, it is known to be a homomorphism when $S$ is finite; see \cite{Schein}. The following lemma shows that the finiteness condition is, nevertheless, not necessary.

\begin{lemma}\label{Lema2.1}
Let $Y$ be a right $S$-invarient subset of $T$. Then, with the notations defined above, $\sigma^{Y}$ is a homomorphism.
\end{lemma}
\begin{proof}
We need to show that $\sigma^{Y}_s  \sigma^{Y}_t = \sigma^{Y}_{st}$, for all $s,t \in S$. By Theorem \ref{Wagnr-Prstn} we have $\rho_{st} = \rho_s \rho_t$, whence
\[
Dom\rho_{st} = Dom(\rho_s  \rho_t) = (Ts^{-1}s \cap Ttt^{-1})\rho^{-1}_s.
\]
Consequently, we have
\begin{equation*}\label{Hat1}
\sigma^Y_{st} = \rho_{st}|_{(Ts^{-1}s \cap Ttt^{-1})\rho^{-1}_s\cap Y} = (\rho_s  \rho_t)|_{(Ts^{-1}s \cap Ttt^{-1})\rho^{-1}_s\cap Y}.
\end{equation*}
On the other hand, one has
\[
\sigma^Y_s  \sigma^Y_t = \left(\rho_s|_{Tss^{-1}\cap Y}\right)\left( \rho_t|_{Ttt^{-1}\cap Y}\right).
\]
Particularly, we note that
\begin{equation*}
\begin{split}
Dom(\sigma^Y_{st}) &= (Ts^{-1}s \cap Ttt^{-1})\rho^{-1}_s\cap Y \\
Dom (\sigma^Y_s  \sigma^Y_t) &= [(Tss^{-1} \cap Y)\rho_s \cap (Ttt^{-1} \cap Y)]\rho_s^{-1}.
\end{split}
\end{equation*}
Now, using the fact that $\sigma^Y_x$ is a restrictions of $\rho_x$, for all $x \in S$, we may write for all $x\in Dom(\sigma^Y_{st}) \cap Dom(\sigma^Y_s \sigma^Y_t)$:
\[
(x)\sigma^Y_{st}  =(x)\rho_{st} =(x)(\rho_s  \rho_t) = (x)(\sigma^Y_s  \sigma^Y_t).
\]
Thus, the proof will be accomplished if we show that $Dom(\sigma^Y_s \sigma^Y_t) = Dom(\sigma^Y_{st})$. 
To this end, observe that
\begin{equation*}
\begin{split}
     x & \in [(Tss^{-1} \cap Y)\rho_s \cap (Ttt^{-1} \cap Y)]\rho^{-1}_s \\
    \iff x \rho_s & \in (Tss^{-1} \cap Y)\rho_s \cap Ttt^{-1} \cap Y \\
    \iff \hspace{10pt}  x & \in  Tss^{-1} \cap Y,\,\, x\rho_s \in Ttt^{-1} \cap Y\\
    \iff  \hspace{10pt} x &\in Tss^{-1},\,\, x \in Y, \,\, x\rho_s \in Ttt^{-1},\,\, x\rho_s \in Y\\
    \iff  x\rho_s &\in Ts^{-1}s,\,\, x\rho_s \in Ttt^{-1},\,\, x \in Y,\,\, x\rho_s \in Y\\
    \iff  x\rho_s &\in (Ts^{-1}s \cap Ttt^{-1}),\,\, x \in Y,\,\, x\rho_s \in Y\\
    \iff \hspace{10pt} x & \in (Ts^{-1}s \cap Ttt^{-1}) \rho^{-1}_s \cap Y,\\
\end{split}
\end{equation*}
where the ($\impliedby$) part of the last implication follows from the observation:
\[
x \in (Ts^{-1}s \cap Ttt^{-1}) \rho^{-1}_s \implies x\rho_s \in Ts^{-1}s \implies x \in Tss^{-1} \implies x \in Dom\rho_s.
\]
Hence $Dom(\sigma^Y_s \sigma^Y_t) = Dom(\sigma^Y_{st})$, and the proof is complete.
\end{proof}

\begin{proposition}\label{Prop2.2a}
    Let $S$ be a subsemigroup of an inverse semigroup $T$. Then the map $\sigma^{_S}: S \longrightarrow \mathcal{I}_S$, given by (\ref{Eq2.1}), is a homomorphism.
\end{proposition}

\begin{proof}
Follows from the fact that every subsemigroup $S$ of an inverse semigroup $T$ is a right $S$-invariant subset of $T$.
\end{proof}
\noindent In what follows, dropping the superscript, we shall simply write $\sigma$ instead of $\sigma^{_S}$. Although the first part of the following proposition also follows from \cite{Gould_Notes} Theorem 6.2 (iv), our essentially direct scheme of proof will be instructive in the sequel.

\begin{proposition}\label{Prop2.2}
If $S$ is left ample in $T$, then $\sigma: S \longrightarrow \mathcal{I}_S$, as given by Proposition \ref{Prop2.2a}, is a monomorphism. Also, for every $s\in S$ the domain and image of $(s)\sigma = \sigma_s$ are the sets $Sss^{-1}$ and $Ss$, respectively. Moreover, $(S)\sigma$ is left ample in $\mathcal{I}_S$.
\end{proposition}

\begin{proof}
To prove the first part of the proposition, we only need to show that $\sigma$ is injective. Assume, for this purpose, that
\[
\sigma_s = (s)\sigma = (t)\sigma = \sigma_t,\; \text{i.e. } \,\rho_s|_{Tss^{-1}\cap S} = \rho_t|_{Ttt^{-1}\cap S},
\]
for some $s,t \in S$. Because $S$ is left ample, we have $ss^{-1} \in Tss^{-1} \cap S$ and $tt^{-1} \in Ttt^{-1} \cap S$. Consequently, we may claim, using $Tss^{-1} \cap S = Ttt^{-1} \cap S$ (viz. the equality of domains of $\sigma_s$ and $\sigma_t$), that
\[
(ss^{-1})\rho_s = (ss^{-1})\rho_t\; \text{ and }\; (tt^{-1})\rho_s = (tt^{-1})\rho_t.
\]
This gives
\[
s = ss^{-1}s = ss^{-1}t \; \text{ and }\; t = tt^{-1}t = tt^{-1}s. 
\]
Now, it follows from the definition of order on $T$ that $s \leq t \leq s$. Thus we get $s = t$, whence $\sigma$ is injective.\vspace{2pt}

\noindent As for the second part of the proposition, let us first notice that
\[
Sss^{-1} \subseteq Tss^{-1} \cap S,\ \forall\, s\in S.
\]
Then observe that, for all $x \in T,\, s \in S$,
\[
x(ss^{-1}) \in Tss^{-1} \cap S \implies x(ss^{-1}) = x(ss^{-1})(ss^{-1}) \in Sss^{-1}.
\]
Hence, for every $s\in S$ one has
\begin{equation}\label{Dom}
    Sss^{-1}= Tss^{-1} \cap S.
\end{equation}
This implies that for every $s \in S$, the domain and image of $\sigma_s$ are, respectively, the sets
\begin{equation*}\label{Dom&Im}
Sss^{-1} \text{ and } Sss^{-1}s = Ss\,\, (\subseteq Ss^{-1}s \cap S).
\end{equation*}

To prove the last part of the proposition, we first consider the isomorphism $\rho': T \rho \longrightarrow T$ defined by $\rho_x \longmapsto x$ for all $x \in T$. Then, one may define an isomorphism
\[
\rho'|_{S\rho} \circ \sigma :S\rho \longrightarrow S\sigma
\]
given by
\[
\rho_s \longmapsto 
\sigma_s,\, \forall \, s\in S.
\]
Next, because $\sigma_s \sigma_s^{-1}$ is the identity on $Sss^{-1} = Tss^{-1} \cap S$ and $\rho_{ss^{-1}}$ is the identity on $Tss^{-1}$,
we have
\[
\sigma_s \sigma_s^{-1} = \rho_{ss^{-1}}|_{Dom\rho_{ss^{-1}}\cap S}.
\]
Now, since $ss^{-1}\in Dom\rho_{ss^{-1}} \cap S$, it follows from the above equality that
\[
\sigma_s \sigma_s^{-1} = (\rho_{ss^{-1}})\left(\rho' \circ \sigma \right) = \left( (\rho_{ss^{-1}}) \rho' \right) \sigma\in S\sigma.
\]

\noindent implying that $S\sigma$ is left ample in $\mathcal{I}_S$. 
\end{proof}

\begin{corollary} \label{Clry2.3}
    Let $S$ be left ample in $T$.
Also, let $\sigma_s = (s)\sigma$ and $\rho_s = (s)\rho$ be as considered in Proposition \ref{Prop2.2}. Then, considering the isomorphism
    \[
    \theta : (S)\rho \longrightarrow \mathcal{I}_S, \text{ defined by }\ \rho_s \longmapsto \sigma_s, \forall\, s \in S
    \]
from Proposition \ref{Prop2.2}, we have, for all $s\in S$,
    \[
    (\rho_s\rho_s^{-1})\theta = \sigma_s\sigma_s^{-1} = \sigma_{ss^{-1}},
    \]
    where $ss^{-1} = (\rho_s\rho_s^{-1})\rho^{-1} \in S$, cf. (\ref{Identification}).
\end{corollary}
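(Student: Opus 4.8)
The plan is to derive this corollary almost entirely from the work already done in Proposition~\ref{Prop2.2}, since the isomorphism $\theta$ here is literally the map $(\rho_{_T})^{-1}\circ\hat\rho_{_S}$ whose action $\rho_s\mapsto\sigma_s$ was established there. So the only genuinely new content is the chain of equalities $(\rho_s\rho_s^{-1})\theta=\sigma_s\sigma_s^{-1}=\sigma_{ss^{-1}}$ together with the identification $ss^{-1}=(\rho_s\rho_s^{-1})\rho_{_T}^{-1}$. First I would note that $\theta$ is a homomorphism (indeed isomorphism) of inverse semigroups, so it commutes with the inverse operation; hence $(\rho_s\rho_s^{-1})\theta=(\rho_s)\theta\,((\rho_s)\theta)^{-1}=\sigma_s\sigma_s^{-1}$. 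That disposes of the first equality immediately.

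For the second equality $\sigma_s\sigma_s^{-1}=\sigma_{ss^{-1}}$, I would invoke the computation performed inside the proof of Proposition~\ref{Prop2.2}: there it was shown that $\sigma_x\sigma_x^{-1}=\rho_{xx^{-1}}|_{Dom\rho_{xx^{-1}}\cap S}$, and since $xx^{-1}\in S$ (left ampleness) this restriction is by definition $(xx^{-1})\hat\rho_{_S}=\sigma_{xx^{-1}}$. Applying this with $x=s$ gives $\sigma_s\sigma_s^{-1}=\sigma_{ss^{-1}}$. The only point that needs a word of care is that $\sigma_{ss^{-1}}$ makes sense as an element of $(S)\hat\rho_{_S}$, which is exactly the assertion $ss^{-1}\in S$ that left ampleness provides, and that it really is the identity map on $Sss^{-1}=Tss^{-1}\cap S$ (the domain/codomain description from~(\ref{Dom}) and~(\ref{Dom&Im})), matching $\sigma_s\sigma_s^{-1}$.

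Finally, the parenthetical claim $ss^{-1}=(\rho_s\rho_s^{-1})\rho_{_T}^{-1}$ is just the statement that $\rho_{_T}$ is the Wagner--Preston monomorphism, under which $\rho_s=(s)\rho_{_T}$ and therefore $\rho_s\rho_s^{-1}=(s)\rho_{_T}((s)\rho_{_T})^{-1}=(ss^{-1})\rho_{_T}$ since $\rho_{_T}$ respects inverses; applying $\rho_{_T}^{-1}$ (on its image) yields the claim. I do not anticipate any real obstacle here: the corollary is a bookkeeping consequence of Proposition~\ref{Prop2.2}. The one spot demanding mild attention is making sure the restriction identity $\sigma_x\sigma_x^{-1}=\rho_{xx^{-1}}|_{Dom\rho_{xx^{-1}}\cap S}$ is applied with the correct element and that the coincidence of domains ($Dom\sigma_s=Sss^{-1}=Dom\sigma_{ss^{-1}}$) is noted, so that equality of the two partial bijections follows from equality of their rules on a common domain rather than being merely asserted.
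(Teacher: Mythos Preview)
Your proposal contains a genuine gap in the first step. You claim that $\theta$ is a homomorphism of \emph{inverse} semigroups and therefore commutes with the inverse operation, yielding $(\rho_s\rho_s^{-1})\theta=(\rho_s)\theta\,((\rho_s)\theta)^{-1}$. But the domain of $\theta$ is $(S)\rho_{_T}$, which is isomorphic to $S$ itself and hence is merely a left ample (not inverse) subsemigroup of $\mathcal{I}_T$. In particular, $\rho_s^{-1}=\rho_{s^{-1}}$ need not lie in $(S)\rho_{_T}$ at all, so $\theta$ is not defined on it, and the factorisation $(\rho_s\rho_s^{-1})\theta=(\rho_s)\theta\cdot(\rho_s^{-1})\theta$ is unavailable. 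A semigroup homomorphism whose domain is not an inverse semigroup has no reason to interact well with inverses taken in an ambient inverse semigroup.

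The repair is minor and is exactly what the paper does: work in the other order. First use that $\rho_{_T}$ is defined on all of the inverse semigroup $T$ (and hence \emph{does} preserve inverses) to rewrite $\rho_s\rho_s^{-1}=\rho_s\rho_{s^{-1}}=\rho_{ss^{-1}}$. Since $ss^{-1}\in S$ by left ampleness, $\rho_{ss^{-1}}$ lies in the domain of $\theta$, and by definition $(\rho_{ss^{-1}})\theta=\sigma_{ss^{-1}}$. Now combine this with your second step, $\sigma_{ss^{-1}}=\sigma_s\sigma_s^{-1}$ (which you handled correctly via the restriction identity from Proposition~\ref{Prop2.2}), to obtain the full chain $(\rho_s\rho_s^{-1})\theta=\sigma_{ss^{-1}}=\sigma_s\sigma_s^{-1}$. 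Your treatment of the second equality and of the parenthetical identification $ss^{-1}=(\rho_s\rho_s^{-1})\rho_{_T}^{-1}$ is fine.
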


\begin{proof}
Clearly, we have
\begin{equation*}    
(\rho_s\rho_s^{-1})\theta = (\rho_s\rho_{s^{-1}})\theta  =(\rho_{ss^{-1}})\theta 
= \sigma_{ss^{-1}}.
\end{equation*}
Also, using (\ref{Dom}), we can calculate:
\begin{equation*}
\sigma_{ss^{-1}} 
= \rho_{ss^{-1}}|_{Sss^{-1}}
= (\rho_s \rho_{s^{-1}})|_{Sss^{-1}}
= (\rho_s\rho_s^{-1})|_{Sss^{-1}}
= (\rho_s|_{Sss^{-1}})(\rho_s|_{Sss^{-1}})^{-1}
= \sigma_s\sigma_s^{-1}.
\end{equation*}
This completes the proof.
\end{proof}

\noindent With $S$ replaced with  $S_1$, Figure 1 shows all maps used in Proposition \ref{Prop2.2}, Corollary \ref{Clry2.3}.

\begin{remark}\label{RMRK2.5}
For any left ample semigroup $S$, it follows from the above corollary that
\[
\left(\sigma_s\sigma_s^{-1}\right)\sigma' = ss^{-1} = \left(\rho_s\rho_s^{-1}\right)\rho' ,\, \forall \, s \in S,
\]
with $\rho'$ introduced in the proof of Proposition \ref{Prop2.2} and $\sigma'$ being defined analogously.
\end{remark}

In Proposition \ref{Prop2.2} and its corollary, $S$ was left ample in $T$. In the following theorem $S$ is not necessarily contained in its associated inverse semigroup. (In fact, this result relates more closely to Theorem 6.2 (iv) of \cite{Gould_Notes}; cf. the paragraph before Proposition \ref{Prop2.2}.)

\begin{theorem}\label{Prop2.4}
Let $S$ be a left ample semigroup. Then, for every $x\in S$ the mapping $\hat{\rho}_x:Sxx^{-1}\longrightarrow Sx$ defined by $(z)\hat{\rho}_x = zx$, for all $z \in Sxx^{-1}$, is a bijection. Furthermore, the function $\hat{\rho}:S \longrightarrow \mathcal{I}_S$ given by $x\longmapsto \hat{\rho}_x$ is a monomorphism, and $(S)\hat{\rho}$ is left ample in $\mathcal{I}_S$.
\end{theorem}

\begin{proof}
To help the reader, the morphisms used in the proof are shown in Figure 1.
Let $T$ be an (associated) inverse semigroup admitting a monomorphism
\[\phi:S\longrightarrow T,
\]
such that $(S)\phi = S_1$ is left ample in $T$. Let $\phi':S \longrightarrow S_1$ be the isomorphism defined by $x \longmapsto (x)\phi$, and $\sigma:S_1 \longrightarrow \mathcal{I}_{S_1}$, defined by
\[
(x)\phi' \longmapsto \sigma_{(x)\phi'},\, \forall\, x \in S,
\]
be the monomorphism given by Proposition \ref{Prop2.2}. Particularly, $S_1\sigma$ is left ample in $\mathcal{I}_{S_1}$.\vspace{2pt}

Let $\hat{\phi} = \phi' \circ \sigma:S \longrightarrow \mathcal{I}_{S_1}$, and $x$ be an arbitrary element of $S$. Then, defining $s = (x)\phi'$, and using Corollary \ref{Clry2.3}, we may calculate,
\begin{equation*}\label{Obtain}
\begin{split}
(x\hat{\phi})
(x\hat{\phi})^{-1} 
&= \left(\left(x\phi'\right)\sigma \right)
\left(\left(x\phi'\right)\sigma \right)^{-1} \\
&= \left( s\sigma \right)\left( s\sigma \right)^{-1}\\
&=\sigma_s\sigma_s^{-1} \\
&=\sigma_{ss^{-1}}\\
&=\sigma_{(x\phi')(x\phi')^{-1}}\\
&= \sigma_{(xx^{-1})\phi'}, \hspace{14pt} \text{refer to (\ref{Identification})}, \\
&= (xx^{-1})\hat{\phi}.
\end{split}
\end{equation*}
Thus, we may write \vspace{2pt}
\begin{equation}\label{xxINV}
[(x\hat{\phi})(x\hat{\phi})^{-1}]\hat{\phi}^{-1} = [(x\phi')(x\phi')^{-1}]\phi'^{-1} =  xx^{-1} \in S.\vspace{2pt}
\end{equation}

\begin{figure}[htbp]
\centering
\includegraphics [width=90mm]{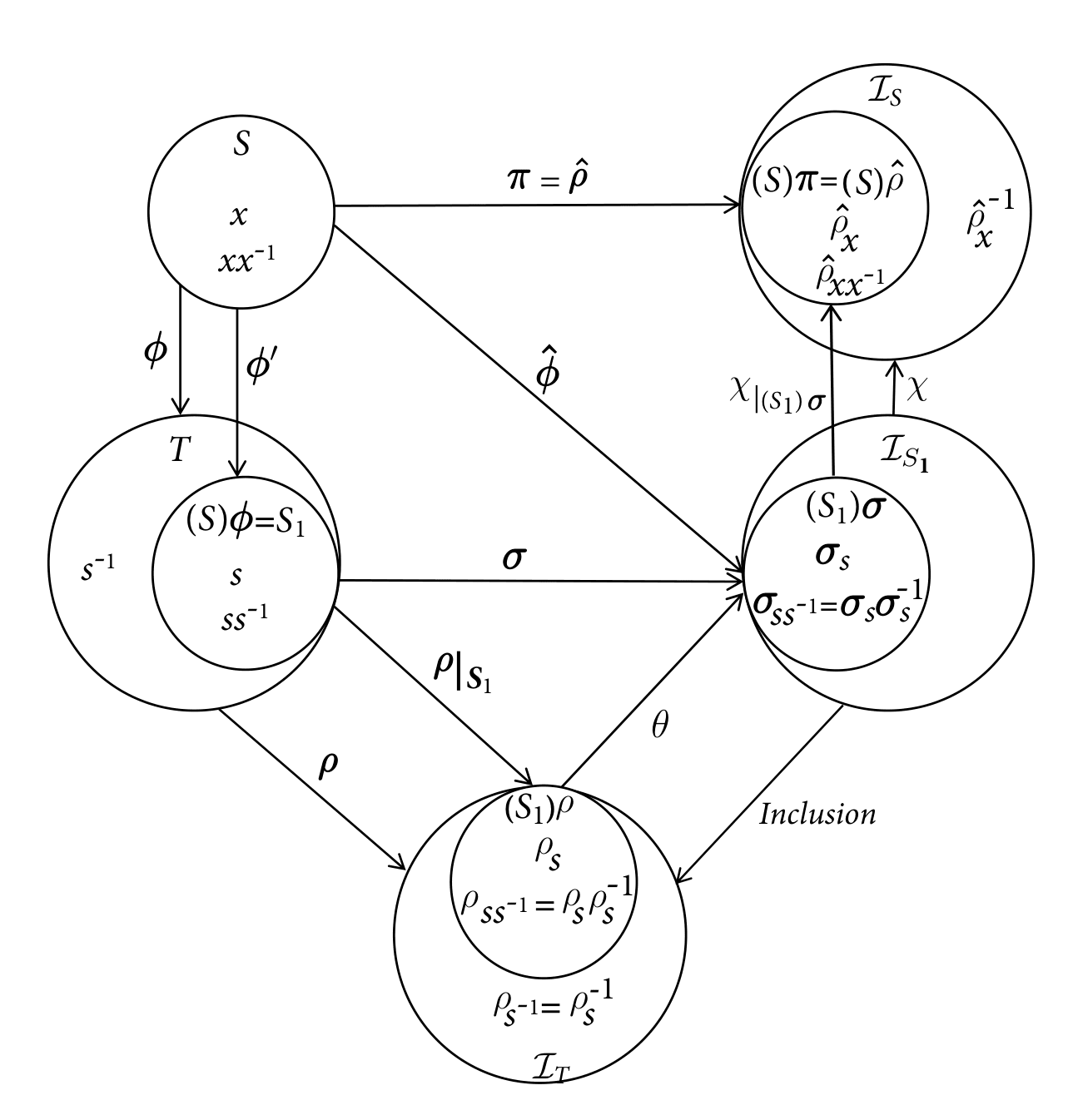}
\caption{}
\end{figure}\label{Diagram1}

\noindent Now, for an arbitrary partial bijection $\beta : A \longrightarrow B$ in $\mathcal{I}_{S_1}$, consider the map
\[
\gamma:(A)(\phi')^{-1} \longrightarrow (B)(\phi')^{-1}
\]
defined by
\begin{equation}\label{map2.1}
(a)(\phi')^{-1} \longmapsto (a\beta)(\phi')^{-1}, \, \forall\, a \in A.
\end{equation}
Clearly, $\gamma$ is an element of $\mathcal{I}_S$, and it is a routine to verify that $\chi: \mathcal{I}_{S_1} \longrightarrow \mathcal{I}_S$ given by $\beta \longmapsto \gamma$ is an isomorphism. Moreover, by Remark \ref{Rmrk1.4}, $(S\hat{\phi})\chi = (S_1\sigma)\chi$ is left ample in $\mathcal{I}_{S}$.\vspace{2pt}

To complete the proof, let us consider the monomorphism $\pi = \hat{\phi} \circ {\chi}\colon S \longrightarrow \mathcal{I}_S$.
The aim is to show that $\pi = \hat{\rho}$, i.e. for all $x \in S$, $(x)\pi = \hat{\rho}_x$
where $\hat{\rho}_x$ is as given in the statement of the theorem. Recall from Proposition \ref{Prop2.2} that the domain and image of $\sigma_{(x)\phi}$ ($=\sigma_s$) are, respectively, $(S_1)(x\phi')(x\phi')^{-1}$ and $(S_1)(x\phi')$. Hence, by the definitions of $\pi$ and $\chi$, we have
\[
(x)\pi = \left( \sigma_{(x)\phi}\right)\chi: \{S_1(x\phi')(x\phi')^{-1}\} (\phi')^{-1}\longrightarrow \{S_1(x\phi')\}(\phi')^{-1},\vspace{2pt}
\]
defined according to (\ref{map2.1}). However, it is immediate that
\[
\{S_1(x\phi')\}(\phi')^{-1} = Sx,
\]
and by (\ref{xxINV}), we have
\[
\{S_1(x\phi')(x\phi')^{-1}\} (\phi')^{-1} = Sxx^{-1}.
\]

\noindent Also, taking $z =uxx^{-1}, u \in S$, we may calculate, using (\ref{map2.1}),
\begin{equation*}
\begin{split}
 (z)(x\pi) &= (uxx^{-1}) [(x)\hat{\phi}\circ \chi]\\
&=\left(uxx^{-1}\right)\left(\left(\sigma_{(x)\phi'}\right)\chi\right)\\
&=((uxx^{-1})\phi'(x)\phi')\phi'^{-1}\\
\end{split}
\end{equation*}

\begin{equation*}
\begin{split}
&=((uxx^{-1}x)\phi')\phi'^{-1}\\
&= uxx^{-1}x\\
&=zx.
\end{split}
\end{equation*}
So, we conclude that
\[
(x)\pi: Sxx^{-1} \longrightarrow Sx, 
\]
is defined by $(z)(x\pi) =zx$ for all $z =uxx^{-1} \in Sxx^{-1}$ ($u\in S$), hence the proof.
\end{proof}

Before stating our main result, let us note that using the dual notion of \textit{left $S$-invariance}, one can similarly prove that any right ample subsemigroup $S$ of an inverse semigroup $T$ has an anti-isomorphic copy in the symmetric inverse semigroup $\mathcal{I}_S$ given by the anti-monomorphism,
\begin{equation*}
\delta:S \longrightarrow \mathcal{I}_S, \hspace{5pt} \text{given by } s \longmapsto {\delta_s} =  {\lambda_s}|_{s^{-1}sT \, \cap\, S},\, \forall\, s \in S,
\end{equation*}
where $\lambda_s \in \mathcal{I}_T$ is the partial bijection defined in Theorem \ref{Wagnr-Prstn_D}. Moreover, $s^{-1}sT \cap S = s^{-1}sS$, and we have the following dual of Theorem \ref{Prop2.4}.

\begin{theorem}\label{Prop2.4a}
For any right ample semigroup $S$, there exists an anti-monomorphism,
\[
\hat{\lambda}:S \longrightarrow \mathcal{I}_S, \text{ given by } x \longmapsto \hat{\lambda}_x,
\]
where the partial bijection,
\begin{equation*}
    \hat{\lambda}_x :x^{-1}xS \longrightarrow xS
\end{equation*}
is given by $(z)\hat{\lambda}_x = xz$, for all $z \in x^{-1}xS$. Moreover, $(S)\hat{\lambda}$ is left ample in $\mathcal{I}_S$. 
\end{theorem}

\noindent We now prove the main theorem of this section.

\begin{figure}[htbp]
\centering
\includegraphics [width=90mm]{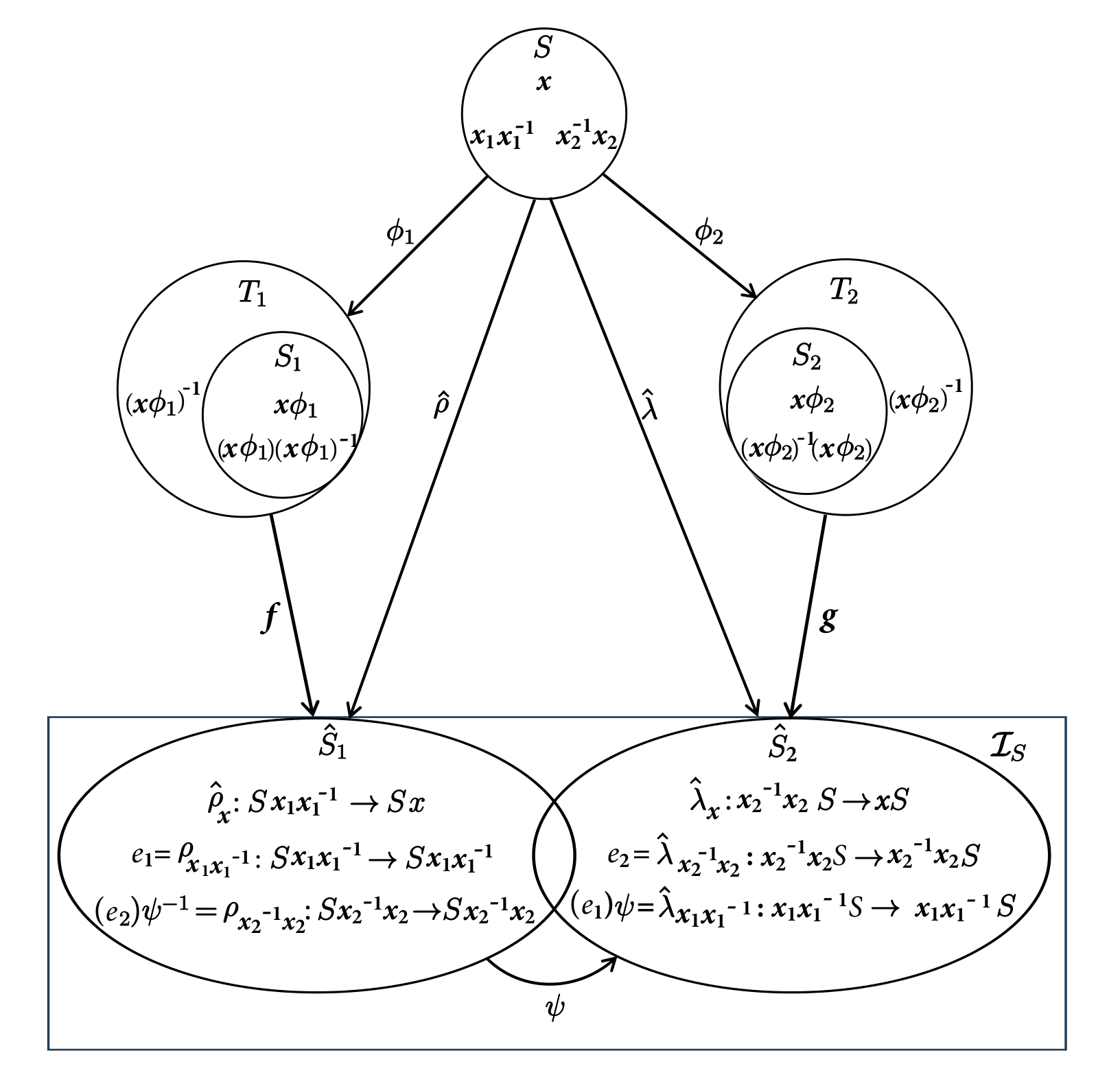}
\caption{}
\end{figure}\label{Diagram1}

\begin{theorem}\label{Thrm2.8}
Let $S$ be a finite ample semigroup such that for all $e\in E(S)$ there exists a bijection between the principal ideals $eS$ and $Se$. Then $S$ is ample in, and hence a $(2,1,1)$-subalgebra of, $\mathcal{I}_S$.
\end{theorem}

\begin{proof}
If $S$ is an inverse semigroup then there is nothing to prove. So, assume that $S$ is a finite non-inverse ample semigroup satisfying the condition of the theorem.\vspace{2pt}

Let $\phi_i:S\longrightarrow T_i,\, 1\leq i \leq 2$, be monomorphisms such that $S_1 = S\phi_1$ is left ample in $T_1$ and $S_2 = S\phi_2$ is right ample in $T_2$.
Also, for each $i \in \{1,2\}$, let $\phi'_i:S\longrightarrow S_i$ be the isomorphisms defined by $s \longmapsto x\phi_i$. In a way similar to (\ref{Identification}), define for all $x \in S$, $x_1x_1^{-1}, x_2^{-1}x_2 \in E(S)$ by
\begin{equation*}
\begin{split}
x_1 x_1^{-1}&:=[(x\phi_1)(x \phi_1)^{-1}]\phi_1'^{-1},\vspace{4pt} \\
x_2^{-1}x_2&:=[(x\phi_2)^{-1}(x \phi_2)]\phi_2'^{-1}.
\end{split}
\end{equation*}
Also, denote $S\hat{\rho}$ and $S\hat{\lambda}$ by $\hat{S_1}$ and $\hat{S_2}$, respectively, where $\hat{\rho}$ is the monomorphism given by Theorems \ref{Prop2.4} and $\hat{\lambda}$ is the anti-monomorphism given by Theorem \ref{Prop2.4a}. Then there exists an anti-isomorphism $\psi:\hat{S_1}\longrightarrow \hat{S_2}$ such that $\hat{\rho}\circ \psi = \hat{\lambda}$. Note that $f=\sigma \circ \chi: S_1 \longrightarrow \mathcal{I}_S$ is a monomorphism (refer to Figure 1). Also, there exists an anti-monomorphism  $g:S_2 \longrightarrow \mathcal{I}_S$ yielded from Theorem \ref{Prop2.4a}. (All these maps are shown in Figure 2.) \vspace{2pt}

\noindent Now, consider the following elements of $\hat{S_1}$:
\begin{equation*}
\begin{split}
    \hat{\rho}_x&:Sx_1x_1^{-1} \longrightarrow Sx,\\
    e_1 = \hat{\rho}_{x_1x_1^{-1}}&: Sx_1x_1^{-1} \longrightarrow Sx_1x_1^{-1}\\
    = \hspace{1.5pt}\hat{\rho}_{x} \hat{\rho}^{-1}_{x}&: Sx_1x_1^{-1} \longrightarrow Sx_1x_1^{-1}.
\end{split}
\end{equation*}
Analogously, the following elements belong to $\hat{S_2}$:
\begin{equation*}
\begin{split}
\hat{\lambda}_x&:x_2^{-1}x_2S \longrightarrow xS,\\
e_2 =\; \hat{\lambda}_{x_2^{-1}x_2}&: x_2^{-1}x_2S \longrightarrow x_2^{-1}x_2S\\
=\hspace{3.5pt} \hat{\lambda}_{x}\hat{\lambda}^{-1}_{x}&: x_2^{-1}x_2S \longrightarrow x_2^{-1}x_2S.
\end{split}
\end{equation*}
As $\psi(\hat{\rho}_x) = \hat{\lambda}_x$, we also note that
\begin{equation}\label{AntiIso}
\begin{split}
(e_2)\psi^{-1}=\hat{\rho}_{x_2^{-1}x_2}&:S{x_2^{-1}x_2} \longrightarrow S{x_2^{-1}x_2} \in \hat{S}_1,\\
    (e_1) \psi = \hat{\lambda}_{x_1x_1^{-1}}&: x_1x_1^{-1}S \longrightarrow x_1x_1^{-1}S \in \hat{S}_2.
\end{split}
\end{equation}
Let us now carry out the following calculation:
\begin{equation*}
\begin{split}
    (\hat{\rho}_x)(e_2\psi^{-1})&= (\hat{\rho}_x)\left((\hat{\lambda}_{x}\hat{\lambda}^{-1}_{x})\psi^{-1}\right)\\
    &=(\hat{\lambda}_x)\psi^{-1}\left((\hat{\lambda}_{x}\hat{\lambda}^{-1}_{x})\psi^{-1}\right)\\
    &= (\hat{\lambda}_{x}\hat{\lambda}^{-1}_{x} \hat{\lambda}_x)\psi^{-1}\\
    &=(\hat{\lambda}_x)\psi^{-1}\\
    &=\hat{\rho}_x
\end{split}
\end{equation*}

\noindent Because $\hat{\rho}_x^{-1}\hat{\rho}_x \in E(\mathcal{I}_S)$ is the minimal idempotent such that $\hat{\rho}_x(\hat{\rho}^{-1}_x\hat{\rho}_x)=\hat{\rho}_x$, we must have
\begin{equation}\label{Eqn2.6}
\hat{\rho}^{-1}_x\hat{\rho}_x \leq (e_2)\psi^{-1} =  \hat{\rho}_{x_2^{-1}x_2}, \hspace{8pt} \text{ refer to (\ref{AntiIso})}.
\end{equation}
By a similar token, we also obtain
\begin{equation}\label{Eqn2.7a}
{\hat{\lambda}_x^{-1}}{\hat{\lambda}_x} \leq\, (e_1)\psi = {\hat{\lambda}_{x_1x^{-1}_1}}.
\end{equation}
The proof will be accomplished if show that $\hat{\rho}^{-1}_x\hat{\rho}_x \in \hat{S}_1$, wherefore it suffices to show that
\[
Dom(\hat{\rho}^{-1}_x\hat{\rho}_x)  = Dom(\hat{\rho}_{x^{-1}_2x_2})\hspace{8pt}(=Dom(e_2)\psi^{-1}).
\]
Since $S$ is a finite semigroup and by (\ref{Eqn2.6}) we have $Dom(\hat{\rho}^{-1}_x\hat{\rho}_x) \subseteq Dom(\hat{\rho}_{x^{-1}_2x_2})$, the aim will be achieved if there exists a bijection between  $Dom(\hat{\rho}^{-1}_x\hat{\rho}_x)$ and $Dom(\hat{\rho}_{x^{-1}_2x_2})$. Let $\mid X \mid$ denote the cardinality of $X$. Then the aim is to show that
\[
\mid Dom(\hat{\rho}^{-1}_x\hat{\rho}_x) \mid \, =\, \mid Dom(\hat{\rho}_{x^{-1}_2x_2}) \mid.
\]
Using (\ref{Eqn2.6}) and (\ref{Eqn2.7a}) we first obtain,
\[
\mid Sx \mid \, =\, \mid Dom(\hat{\rho}^{-1}_x\hat{\rho}_x) \mid \, \leq \, \mid Dom(\hat{\rho}_{x_2^{-1}x_2}) \mid \, = \, \mid Sx_2^{-1}x_2 \mid,
\]
and
\[
 \mid x S \mid \, = \, \mid Dom({\hat{\lambda}_x^{-1}}{\hat{\lambda}_x}) \mid \, \leq \,\mid Dom ({\hat{\lambda}_{x_1x_1^{-1}}}) \mid \, = \, \mid x_1x_1^{-1}S \mid.
\]
Now, because $\mid x_2^{-1}x_2S \mid \, = \, \mid xS \mid \,$ and $\mid Sx_1x_1^{-1} \mid \, = \, \mid Sx \mid$, we obtain, by the hypothesis:
\begin{equation}\label{Eqn2.8}
 \, \mid Sx \mid \, \leq \, \mid Sx_2^{-1}x_2 \mid \, = \, \mid x_2^{-1}x_2S \mid \, = \, \mid xS \mid \, \leq \, \mid x_1x_1^{-1}S \mid \, = \, \mid Sx_1x_1^{-1}\mid \, = \, \mid Sx \mid.
\end{equation}
This implies that
\[
\mid Dom(\hat{\rho}^{-1}_x\hat{\rho}_x) \mid \, = \, \mid Sx\mid \, = \, \mid Sx^{-1}_2x_2 \mid \, = \, \mid Dom(\hat{\rho}_{x^{-1}_2x_2}) \mid.
\]
and the proof is complete.
\end{proof}

\begin{corollary}
    If $S$ is an ample semigroup then the following conditions are equivalent.
    \begin{enumerate}
        \item For every $e \in E(S)$ There exists a bijection between the principal ideals $eS$ and $Se$.
        \item For all $x \in S$ there exists a bijectoon between $xS$ and $Sx$.
    \end{enumerate} 
\end{corollary}
\begin{proof}
(1)$\implies$(2) It suffices to observe that in the proof of Theorem \ref{Thrm2.8} finiteness condition is not used to obtain (\ref{Eqn2.8}).\vspace{2pt}

\noindent(2)$\implies$(1) This part is trivial.\vspace{2pt}

\end{proof}

The class of all finite semigroups with central idempotents froms a pseudovariety, denoted by $\mathbf{ZE}$. Denoting by $\mathbf{G}$ and $\mathbf{Com}$ the pseudovarieties of groups and commutative semigroups, we have \cite{JA}:
\[
\mathbf{ZE} = \mathbf{G} \vee \mathbf{Com}.
\]

\noindent If we denote the class (in fact quasivariety) of ample semigroups by $\mathbf{A}$ then the following corollary shows that every member of $\mathbf{A} \wedge \mathbf{ZE}$ is embeddable in an inverse semigroup as $(2,1,1)$-subalgebra.

\begin{corollary}\label{cor:center-idempotents}
    Let $S$ be a finite ample semigroup such that $E(S)$ is contained in the center of $S$. Then $S$ is ample in (equivalently, a $(2,1,1)$-subalgebra of)  $\mathcal{I}_S$.
\end{corollary}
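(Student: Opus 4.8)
The plan is to derive this directly from Theorem \ref{Thrm2.1}, whose hypothesis requires, for every $e \in E(S)$, the existence of a bijection between the principal right ideal $eS$ and the principal left ideal $Se$. The key observation is that centrality of the idempotents forces these two ideals to coincide \emph{as sets}, so that the identity map serves as the required bijection.

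Concretely, I would first fix an arbitrary $e \in E(S)$ and argue that $eS = Se$. Since $e$ lies in the center of $S$, we have $es = se$ for every $s \in S$; hence $\{es : s \in S\} = \{se : s \in S\}$, i.e. $eS = Se$. (If $S$ happens to be a monoid one could phrase this even more simply, but the set-theoretic equality above does not need an identity element.) The identity map $\mathrm{id}_{eS} : eS \longrightarrow Se$ is then a bijection between the two principal ideals.

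With this in hand, the hypothesis of Theorem \ref{Thrm2.1} is satisfied for the finite ample semigroup $S$, so that theorem yields at once that $S$ is ample in $\mathcal{I}_S$, equivalently a $(2,1,1)$-type subalgebra of $\mathcal{I}_S$. This completes the argument.

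There is essentially no obstacle here: the only thing to check is the passage from ``$E(S)$ central'' to ``$eS = Se$'', which is immediate, and everything substantive has already been packaged into Theorem \ref{Thrm2.1}. The one point worth a word of care is making sure the statement is applied with the same notion of ``finite ample'' as in Theorem \ref{Thrm2.1}; since the corollary's hypotheses already include both finiteness and ampleness, no extra verification is needed.
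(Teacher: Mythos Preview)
Your proposal is correct and matches the paper's intended argument: the paper simply declares the corollary ``straightforward,'' and the obvious route is exactly the one you take---centrality of $e$ gives $eS = Se$, so the identity map furnishes the required bijection and Theorem~\ref{Thrm2.1} applies. Your care about finiteness is also appropriate, since the proof of Theorem~\ref{Thrm2.1} (though not its statement) uses that $S$ is finite.
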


\begin{proof}
    Straightforward.
\end{proof}

\section{Examples concerning left (right) ample semigroups}

A semigroup $S$ without zero is called \textit{simple} if it has no proper (two-sided) ideals (recall that groups are precisely the semigroups with no proper one-sided ideals). We say that $S$ is \textit{completely simple} if it is simple and contains an idempotent which is minimal, within the set $E(S)$, with respect to $\preccurlyeq$; refer to (\ref{Eq1.0}). A semigroup $S$ with zero element $0$, is called $0$\textit{-simple} if $S^2 \neq {0}$ and ${0}$ is the only proper ideal of $S$. (The first condition only serves to exclude the two-element null semigroup, which makes the relevant structure theory rather cleaner.) If $S$ is a semigroup with zero then clearly $0 \preccurlyeq e$ for all $e \in E(S)$. In this case, an idempotent $f \in E(S)$ is called \textit{primitive} if $e \preccurlyeq f$ implies that $e \in \{0,f\}$ for all $e \in E(S)$. A semigroup $S$ with zero is said to be \textit{completely $0$-simple}
if it is $0$-simple and contains a primitive idempotent. Completely simple and $0$-simple semigroups form an important class of (regular) semigroups. They are characterized by the celebrated Rees representation theorem (reproduced below for the convenience of the reader).
In this section we shall consider completely $0$-simple inverse semigroups, known as \textit{Brandt semigroups}. (Completely simple inverse semigroups are in fact groups.)\vspace{2pt}

Let $G^0$ be the semigroup obtained by externally adjoining a zero element $0$ to a group $G$ (we call $G^0$ a \textit{zero group}).  Let $I$ and $\Lambda$ be some non-empty (indexing) sets and $P =(p_{\lambda i})$ be a $\Lambda \times I$ matrix over $G^0$ such that none of its rows or columns consists entirely of zeros (in the literature such a matrix is called \textit{regular}). We use $P$ to define an associative binary operation on the set $
(I \times G \times \Lambda) \cup \{0\}$, as follows:
\[   
(i,a,\lambda)(j,b,\mu) = 
     \begin{cases}
       (i, ap_{\lambda j}b, \mu), &\quad\text{if } p_{\lambda j}\neq 0,\\
       0, &\quad\text{if } p_{\lambda j}=0, \\
     \end{cases}
\]
\[
(i,a,\lambda)0=0(i,a,\lambda)=00=0.  
\]
The set $(I \times G \times \Lambda) \cup \{0\}$ together with the above binary operation is called the $I \times \Lambda$ \textit{Rees matrix semigroup} over the $0$-group $G^0$ with \textit{sandwich matrix} $P$. We denote it by $\mathcal{M}^0(G^0, I, \Lambda, P)$. For further background on Rees matrix semigroups the reader may refer to \cite{CP}, Chapter 3.

\begin{theorem}[Rees representation theorem; \cite{Howie}, Theorem 3.2.3]\label{Rees}
    Every completely $0$-simple semigroup is isomorphic to some Rees matrix semigroup $\mathcal{M}(G^{0}, I, \Lambda, P)$. 
\end{theorem}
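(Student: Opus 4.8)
The plan is to reconstruct the coordinates $(i,a,\lambda)$ internally from $S$ by means of Green's relations, following the classical egg-box analysis of a completely $0$-simple semigroup.

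First I would assemble the standard structural preliminaries. From $0$-simplicity together with the existence of a primitive idempotent (equivalently, the minimal condition on one-sided ideals) one deduces that $S$ is regular, that $S\setminus\{0\}$ is a single $\mathcal{D}$-class on which $\mathcal{D}=\mathcal{J}$, and that every nonzero idempotent is primitive. Since $S\setminus\{0\}$ is one $\mathcal{D}$-class and $\mathcal{D}=\mathcal{R}\circ\mathcal{L}=\mathcal{L}\circ\mathcal{R}$, every intersection $R\cap L$ of a nonzero $\mathcal{R}$-class with a nonzero $\mathcal{L}$-class is nonempty, i.e. the egg-box picture of $S\setminus\{0\}$ has no empty boxes.

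Next comes the coordinatization. Fix a nonzero idempotent $e$; by Green's theorem the $\mathcal{H}$-class $G:=H_e$ is a subgroup of $S$ with identity $e$. Index the nonzero $\mathcal{R}$-classes by a set $I$ and the nonzero $\mathcal{L}$-classes by a set $\Lambda$, with distinguished labels $1\in I$, $1\in\Lambda$ such that $R_1=R_e$ and $L_1=L_e$, and write $H_{i\lambda}=R_i\cap L_\lambda$. Choose representatives $p_i\in R_i\cap L_e$ for each $i\in I$ and $q_\lambda\in R_e\cap L_\lambda$ for each $\lambda\in\Lambda$, normalised by $p_1=q_1=e$. Using Green's lemma repeatedly, one shows that left and right translation by the $p_i$ and $q_\lambda$ induce a bijection $G\to H_{i\lambda}$ for every pair $(i,\lambda)$, and that these bijections are compatible with multiplication inside the $\mathcal{D}$-class. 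Collecting them gives a bijection $\Theta\colon(I\times G\times\Lambda)\cup\{0\}\to S$ sending $(i,a,\lambda)$ to the element of $H_{i\lambda}$ with coordinate $a$, and $0$ to $0$.

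It then remains to transport the multiplication of $S$ through $\Theta$. The crucial input here is the Miller--Clifford description of products in a regular $\mathcal{D}$-class: for $x\in H_{i\lambda}$ and $y\in H_{j\mu}$, the product $xy$ is nonzero if and only if $H_{j\lambda}$ contains an idempotent, and in that case $xy\in H_{i\mu}$. One therefore defines $p_{\lambda j}$ to be the $G$-coordinate of the product of the corresponding representatives when $H_{j\lambda}$ is a group, and $p_{\lambda j}=0$ otherwise; a direct computation with the chosen representatives then shows that $\Theta$ carries the Rees product $(i,a,\lambda)(j,b,\mu)$ defined above onto the product in $S$. Hence $S\cong\mathcal{M}^0(G^0,I,\Lambda,P)$ with $P=(p_{\lambda i})$, and $P$ is regular because a zero column or row of $P$ would force the principal two-sided ideal generated by some nonzero element to be proper (or force $S^2=\{0\}$), contradicting $0$-simplicity.

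I expect the main obstacle to be the bookkeeping in the coordinatization step: making a coherent choice of the representatives $p_i,q_\lambda$ and verifying, through repeated use of Green's lemma, that the resulting bijections $G\to H_{i\lambda}$ are simultaneously compatible with left and right translation, so that the transported multiplication comes out in exactly the normal form $(i,\,ap_{\lambda j}b,\,\mu)$ rather than in some twisted variant. The purely structural facts — regularity, the single $\mathcal{D}$-class, $\mathcal{D}=\mathcal{J}$, primitivity of nonzero idempotents, and the Miller--Clifford lemma on products — are classical and may simply be quoted; the delicate part is the combinatorial normalisation that pins down the sandwich matrix.
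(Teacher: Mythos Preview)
Your outline is the standard Green's-relations proof of the Rees theorem and is essentially correct as a sketch; the bookkeeping concern you flag is real but is handled exactly as you describe, and the argument goes through. However, the paper itself does not prove this statement at all: it is quoted as a classical result with a bare citation to Howie, Theorem~3.2.3, and no argument is given. So there is nothing to compare your approach against --- you have supplied a proof where the paper simply invokes the literature.
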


\noindent To characterize Brandt semigroups as a subclass of Rees matrix semigroups, we consider an $I \times I$ sandwich matrix $\Delta$ that has $1$s on the diagonal and zeros elsewhere. We then have the following theorem.
\begin{theorem}[\cite{Howie}, Theorem 5.1.8]
    Every Brandt semigroup is isomorphic to a Rees matrix semigroup $\mathcal{M}^0(G^0,I,I,\Delta)$.
\end{theorem}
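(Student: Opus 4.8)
The plan is to derive the statement from the Rees representation theorem (Theorem~\ref{Rees}) by using the inverse-semigroup hypothesis to whittle the sandwich matrix down to $\Delta$, up to relabeling and rescaling. So let $S$ be a Brandt semigroup, that is, a completely $0$-simple inverse semigroup, and fix by Theorem~\ref{Rees} an isomorphism $S \cong \mathcal{M}^0(G^0, I, \Lambda, P)$ with $P = (p_{\lambda i})$ regular. A one-line computation in the Rees matrix semigroup shows that $(i,a,\lambda)$ is idempotent if and only if $p_{\lambda i}\neq 0$ and $a = p_{\lambda i}^{-1}$; in particular, whenever $p_{\lambda i}\neq 0$ the element $e_{i,\lambda} := (i, p_{\lambda i}^{-1}, \lambda)$ is an idempotent of $S$.

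The key step is to exploit the fact that idempotents of an inverse semigroup commute (recall $E(S)$ is a subsemilattice). Fix $i\in I$ and suppose $p_{\lambda i}\neq 0$ and $p_{\mu i}\neq 0$. Since the relevant sandwich entries are nonzero, the products $e_{i,\lambda}e_{i,\mu}$ and $e_{i,\mu}e_{i,\lambda}$ are nonzero, and they evaluate to $e_{i,\mu}$ and $e_{i,\lambda}$ respectively; commutativity then forces $e_{i,\lambda} = e_{i,\mu}$, whence $\lambda = \mu$. As $P$ is regular, its $i$-th column is not identically zero, so each column of $P$ has exactly one nonzero entry. The symmetric computation with two idempotents $e_{i,\lambda}, e_{j,\lambda}$ lying in a fixed row shows likewise that each row of $P$ has exactly one nonzero entry. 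Therefore the assignment sending $i\in I$ to the unique $\lambda\in\Lambda$ with $p_{\lambda i}\neq 0$ is a bijection $I\to\Lambda$; relabeling $\Lambda$ along this bijection replaces $\mathcal{M}^0(G^0,I,\Lambda,P)$ by an isomorphic Rees matrix semigroup in which $\Lambda = I$ and $p_{\lambda i}\neq 0 \iff \lambda = i$. In other words, we may assume $P$ is ``diagonal'': $p_{ii} =: g_i \in G$ for every $i$, and $p_{\lambda i} = 0$ for $\lambda\neq i$.

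Finally I would normalize the diagonal entries. The map $\theta\colon \mathcal{M}^0(G^0,I,I,P)\longrightarrow \mathcal{M}^0(G^0,I,I,\Delta)$ defined by $\theta(0) = 0$ and $\theta(i,a,\lambda) = (i, g_i a, \lambda)$ is a bijection, with inverse $(i,c,\lambda)\mapsto(i, g_i^{-1}c, \lambda)$, and a case check on the product $(i,a,\lambda)(j,b,\mu)$ according to whether $\lambda = j$ (so that $p_{\lambda j} = g_j$ while $\Delta_{\lambda j} = 1$) or $\lambda\neq j$ (so that both sandwich entries vanish) shows that $\theta$ is a homomorphism, hence an isomorphism. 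Composing $\theta$ with the isomorphisms from the previous two steps gives $S \cong \mathcal{M}^0(G^0,I,I,\Delta)$, as required.

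No step here poses a genuine obstacle — this is a classical result — and the only points needing a little care are the two bookkeeping claims used above, namely that relabeling an index set and rescaling a diagonal sandwich matrix produce isomorphic Rees matrix semigroups; both are dispatched by the explicit bijections just given, or may be quoted from \cite{CP}, Chapter~3. Conceptually, the entire argument is the observation that in a completely $0$-simple inverse semigroup each $\mathcal{R}$-class and each $\mathcal{L}$-class contains exactly one idempotent, which is precisely what commutativity of idempotents yields once everything is written in Rees coordinates.
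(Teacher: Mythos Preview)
Your argument is correct and is the classical route to this result: apply the Rees theorem, use commutativity of idempotents to force $P$ to have exactly one nonzero entry in each row and column, relabel $\Lambda$ by $I$, and then rescale the diagonal away. The paper itself gives no proof at all --- the theorem is simply quoted from \cite{Howie}, Theorem 5.1.8 --- so there is nothing to compare against; your write-up is essentially the proof one finds in Howie (or in \cite{CP}, Chapter 3).
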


\noindent Since the sandwich matrix $\Delta$ will always be obvious, the Brandt semigroup $ \mathcal{M}^0(G^0,I,I,\Delta)$ will, from now on, be denoted by its universe $(I \times G \times I) \cup \{0\}=B$.  Note that the inverse of $(i,g,j) \in B$ is $(j,g^{-1},i)$, where $g^{-1}$ is the inverse of $g$ in $G$. Also, the non-zero idempotents of $B$ are of the form $(i,1,i),\, i \in I$, where $1$ is the identity of $G$.\vspace{2pt}

It is an easy exercise to verify that a semigroup with zero that has no proper non-zero one sided ideal is a zero group (and hence an inverse semigroup). This implies that commutative $0$-simple semigroups, equivalently, the commutative Brandt semigroups, are zero (Abelian) groups. It is also a routine to verify that a Brandt semigroup is a zero group if and only if $I$ is a singleton. We shall consider left (right) ample subsemigroups of a Brandt semigroup, such that the latter is not a zero group. So, we shall only be dealing with non-commutative Brandt semigroups such that the set $I$ contains more than one element.
\begin{definitions}
By a \textit{strict left ample} semigroup we mean a semigroup that is left but not right ample. \textit{Strict right ample} semigroups are defined dually.
\end{definitions}

\begin{remark}\label{Prop3.4}
A semigroup $S$ is a strict left ample if and only if the following conditions hold.
\begin{enumerate}
    \item $(S)\hat{\rho} = \hat{S}_1$ is left ample in $\mathcal{I}_S$, refer to Theorem \ref{Prop2.4} and Figure 2. \vspace{4pt}
    \item  Either $\hat{\lambda}$ is not an anti-monomorphism or $(S)\hat{\lambda} = \hat{S}_2$ is not left ample in $\mathcal{I}_S$, cf. Theorem \ref{Prop2.4a} and Figure 2.
\end{enumerate}
Dual conditions hold for strict right ample semigroups.
\end{remark}

\begin{proof}
($\implies$) Let $S$ be strict left ample. Then, by Theorem \ref{Prop2.4}, $\hat{S}_1$ is left ample in $\mathcal{I}_S$. To prove condition (2), assume on the contrary that $\hat{\lambda}$ is an anti-monomorohism and $\hat{S}_2$ is left ample in $\mathcal{I}_S$. Consider
\[
\hat{S}^{-1}_2 = \{x^{-1} : x \in \hat{S}_2\},
\]
a subsemigroup of $\mathcal{I}_S$. Then, $h:\hat{S}_2\longrightarrow \hat{S}^{-1}_2$, given by $x \mapsto x^{-1}$, is clearly an anti-isomorphism, and $\hat{S}^{-1}_2$ is right ample in $\mathcal{I}_S$. But then $\hat{\lambda} \circ h : S \longrightarrow \mathcal{I}_S$ is a monomorphism, implying that $S$ is right ample, a contradiction.\vspace{2pt}

\noindent ($\impliedby$) By condition (1) $S$ is left ample. Now, suppose on the contrary that there exists a monomorphism $\phi:S \longrightarrow T$ such that $(S)\phi$ is right ample in $T$. Then by Theorem \ref{Prop2.4a}, $\hat{\lambda}:S \longrightarrow \mathcal{I}_S$ is an anti-monomorohism and $(S)\hat{\lambda} = \hat{S}_2$ is left ample in $\mathcal{I}_S$, a contradiction.
The dual conditions for strict right ample semigroups can be verified similarly.
\end{proof}

\begin{theorem}\label{Brandt}
Let $B$ be a Brandt semigroup that is not a zero group. Then, for every $e \in E(B) \smallsetminus {0}$, the principle ideals $eB$ and $Be$ are, respectively, strict left and strict right ample subsemigroups of $B$.
\end{theorem}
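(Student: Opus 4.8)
The plan is to identify $eB$ and $Be$ concretely inside $B$ and then check the relevant definitions by direct computation. Write $e=(i,1,i)$. As a principal right ideal of $B$, $eB$ is the subsemigroup $\{(i,g,k):g\in G,\ k\in I\}\cup\{0\}$, and dually $Be=\{(k,g,i):g\in G,\ k\in I\}\cup\{0\}$. Inspecting the multiplication, the only non-zero idempotent lying in $eB$ (respectively in $Be$) is $e$ itself, so $E(eB)=E(Be)=\{0,e\}$. Since $B$ is not a zero group, $I$ has more than one element, and I will use this to produce elements of the form $(i,1,k)$ (respectively $(k,1,i)$) with $k\neq i$.

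First I would check that $eB$ is left ample and that $Be$ is right ample, using $B$ itself as the associated inverse semigroup. For $x=(i,g,k)\in eB$ the inverse of $x$ in $B$ is $x^{-1}=(k,g^{-1},i)$, so $xx^{-1}=(i,1,i)=e\in eB$, and trivially $0\cdot 0=0\in eB$; hence $xx^{-1}\in eB$ for every $x\in eB$, which is exactly the assertion that $eB$ is left ample in $B$. The mirror computation gives $x^{-1}x=e\in Be$ for every $x\in Be$, so $Be$ is right ample in $B$.

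Next I would show that $eB$ is not right ample; the left--right dual argument then shows that $Be$ is not left ample, which finishes the proof. Suppose, for contradiction, that $eB$ is right ample, say via a monomorphism $\phi$ from $eB$ into an inverse semigroup $T$ with $\big((x)\phi\big)^{-1}(x)\phi\in(eB)\phi$ for all $x\in eB$. Pick $k\in I$ with $k\neq i$ and put $x=(i,1,k)\neq 0$, $a=(x)\phi$. Then $a^{-1}a$ is an idempotent of $T$ lying in $(eB)\phi$, hence an idempotent of $(eB)\phi$, so $a^{-1}a\in\{(0)\phi,(e)\phi\}$ because $E(eB)=\{0,e\}$. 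Using $a\,a^{-1}a=a$: if $a^{-1}a=(0)\phi$ then $a=a\cdot(0)\phi=(x)\phi\cdot(0)\phi=(x\cdot 0)\phi=(0)\phi$, forcing $x=0$; if $a^{-1}a=(e)\phi$ then $a=a\cdot(e)\phi=(xe)\phi$, forcing $x=xe=(i,1,k)(i,1,i)=0$. Either way $x=0$, a contradiction. Hence $eB$ is strict left ample, and dually $Be$ is strict right ample.

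The whole argument is an elementary finite computation; the only delicate point is that non-right-ampleness must be established against \emph{every} inverse oversemigroup, not merely against $B$. This is handled by the identity $E(eB)=\{0,e\}$: an idempotent of $T$ which happens to lie in the copy of $eB$ is automatically an idempotent of $eB$, so the defining equation $x(x^{-1}x)=x$ can hold only when $x=0$ or when $xe=x$, and both conditions fail for the elements $(i,1,k)$ with $k\neq i$ that exist precisely because $B$ is not a zero group. (Alternatively, one could phrase conditions (1) and (2) of Proposition~\ref{Prop3.4}, but the direct verification above is shorter.)
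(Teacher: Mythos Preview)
Your proof is correct. The paper establishes left ampleness of $eB$ in $B$ exactly as you do, but for the failure of right ampleness it goes through Proposition~\ref{Prop3.4}: assuming an inverse oversemigroup $T$ with $u^{-1}u\in eB$ for $u=(\lambda,h,l)$, $l\neq\lambda$, it argues that the representation $\hat{\lambda}_S:S\to\mathcal{I}_S$ satisfies $\hat{\lambda}_u=\hat{\lambda}_0$ and hence cannot be a monomorphism. Your argument sidesteps this machinery entirely: from the observation $E(eB)=\{0,e\}$ you force any putative $x^*$ to equal $0$ or $e$, and then derive the contradiction directly from $x\,x^*=x$ and injectivity of $\phi$. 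Both arguments ultimately rest on the same structural fact---$eB$ has only the two idempotents $0$ and $e$---but yours is shorter and self-contained, while the paper's route serves to illustrate how Proposition~\ref{Prop3.4} and the $\hat{\lambda}$-representation can be deployed.
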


\begin{proof}
Let us arbitrarily fix an element $e=(\lambda,1,\lambda) \in E(B) \smallsetminus \{0\}$. We shall only show that $eB$ is a strict left ample subsemigroup of $B$ (the statement about $Be$ can be proved by a dual argument). 
Let $u$ be an arbitrary element of $eB$. If $u = 0$, then $uu^{-1} =0 \in eB$ and there is nothing to prove.  So, assume that $u\in eB \smallsetminus \{0\}$. Then, we have
\[
u=(\lambda,1,\lambda)(j,h,l) = (\lambda, h, l),\, \text{ where } \,(j,h,l)\in B \, \text{ with } j = \lambda.
\]
 Now, one can easily verify that $ uu^{-1}=(\lambda,1,\lambda)$, where $u^{-1}$ denotes the inverse of $u$ in $B$. 
Clearly, $(\lambda,1,\lambda)$ is an element of $eB$. Hence, $eB$ is left ample in $B$. \vspace{2pt}
  
Next, we use Remark \ref{Prop3.4} to prove that $S =eB$ is not right ample. The aim is to show that $\hat{\lambda}:S \longrightarrow \mathcal{I}_S$ is not an anti-monomorphism. Consider, for this purpose, an element $u = (\lambda, h, l) \in eB \smallsetminus \{0\}$. Let $u^{-1}$ denote the inverse of $u$ in some inverse oversemigroup $T$ of $eB$, such that $u^{-1}u \in eB$ (if no such inverse semigroup exists then we are done). Then, after effectuating necessary identifications, one can easily verify from $uu^{-1}u = u \neq 0$ that $u^{-1}u=(l,1,l)$. Because $I$ is not a singleton, we can choose $u$ such that $l \neq \lambda$. But then,
\begin{equation*}
\hat{\lambda}_u :(l,1,l)(eB) \longrightarrow (\lambda,h, l)(eB)
\end{equation*}
coincides with 
\begin{equation*}
\hat{\lambda}_0:0B \longrightarrow 0B,
\end{equation*}
because
\begin{equation*}
\begin{split}
 (l,1,l)(eB)=(l,1,l)(\lambda,1,\lambda)B  = 0B, \vspace{4pt}\\ (\lambda,h, l)eB = (\lambda,h, l)(\lambda,1,\lambda)B = 0B,
\end{split}
\end{equation*}
implying the $\hat{\lambda}$ is not an anti-monomorphism. This implies, by Proposition \ref{Prop3.4}, that $S =eB$ is not right ample.
\end{proof}

The following example provides a class of strict left ample subsemigroups of Brandt semigroups that are not contained in any non-zero idempotent generated ideals. 

\begin{example}
Consider a Brandt semigroup $B = (I \times G \times I) \cup \{0\}$, where $G$ is group and $I = \{1,2,3\}$.  Let $S = (\{1,2\} \times H \times I) \cup \{0\}$, where $H$ is a submonoid of $G$. Then we have, for all $g \in G$, $j \in \{1,2\}$ and $i \in I$, $(j,g,i)(i,g^{-1},j) = (j,1,j) \in S$. So, identifying $S$ with $S\hat{\rho}$, we see that Condition (1) of Remark \ref{Prop3.4} is satisfied. To prove Condition (2), we identify $S$ with $S\hat{\lambda}$ and notice that $(3,g^{-1},j)(j,g,3) = (3,1,3) \not\in S$. Hence, by Remark \ref{Prop3.4}, $S$ is strict left ample. At the same time, all principal ideals of $B$ are of the form $(\{i\} \times G \times I) \cup \{0\}$ and $S \not\subset (\{i\} \times G \times I) \cup \{0\}$ for any $i \in I$.
\end{example}

Let $B=(I\times G\times I)\cup\{0\}$ be a Brandt semigroup. Then clearly every subsemigroup of $B$ with zero is of the form $S = (I'\times H\times I'')\cup\{0\}$ where $I'$ and $I''$ are subsets of $I$ and $H$ is a subsemigroup of the group $G$. Obviously, a subsemigroup of $B$ without zero is of the form $I'\times H\times I''$.

\begin{theorem}
    Let $S = (I' \times G \times I'') \cup \{0\}$ be a non-trivial subsemigroup of a Brandt semigroup $B = (I \times G \times I) \cup \{0\}$. Then $S$ is left (respectively, right) ample in $B$ if and only if $H$ is a left (respectively, right) ample submonoid of the group $G$.
\end{theorem}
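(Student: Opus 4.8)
The plan is to reduce the statement to a one-line computation inside the Brandt semigroup $B$ and then read off what ``left ample in $B$'' forces on $H$. I will treat the left-ample half; the right-ample half follows by the evident left--right dual, interchanging the roles of $I'$ and $I''$ and of $uu^{-1}$ and $u^{-1}u$. Here $S = (I' \times H \times I'') \cup \{0\}$ with $I', I'' \subseteq I$ and $H$ a subsemigroup of $G$, and the non-triviality of $S$ means that $I'$, $I''$ and $H$ are all non-empty.

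First I would dispose of the zero, which is harmless: $0 \cdot 0^{-1} = 0 \in S$, so it suffices to control $uu^{-1}$ for non-zero $u$. For a non-zero $u = (i,a,\lambda) \in S$ (so $i \in I'$, $a \in H$, $\lambda \in I''$) the inverse in $B$ is $u^{-1} = (\lambda, a^{-1}, i)$, and hence $uu^{-1} = (i, aa^{-1}, i) = (i,1,i)$; dually $u^{-1}u = (\lambda,1,\lambda)$. Therefore $S$ is left ample in $B$ precisely when $(i,1,i) \in S$ for every non-zero $u = (i,a,\lambda) \in S$. Since $H \neq \emptyset$ and $I'' \neq \emptyset$, every $i \in I'$ really does occur as the first coordinate of such a $u$; so this condition is equivalent to $1 \in H$ together with $I' \subseteq I''$.

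It remains to match this with the condition on $H$. Since $G$ is a group, its only idempotent is $1$ and the inverse of $h$ in $G$ is the group inverse $h^{-1}$; hence a subsemigroup $H \leq G$ is left ample in $G$ (witnessed by the inverse semigroup $G$ itself) exactly when $hh^{-1} = 1 \in H$ for all $h \in H$, i.e. exactly when $H$ is a submonoid of $G$. So ``$H$ is a left ample submonoid of $G$'' is just the condition $1 \in H$, which is the part of the previous equivalence that does not refer to the index sets.

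Assembling the pieces then gives the theorem: if $1 \in H$ (and $I' \subseteq I''$) then $uu^{-1} = (i,1,i) \in I' \times H \times I'' \subseteq S$ for every $u = (i,a,\lambda) \in S$, so $S$ is left ample in $B$; conversely, left ampleness of $S$ in $B$ forces $(i,1,i) \in S$ for every $i \in I'$, hence $1 \in H$ (and $I' \subseteq I''$). The right-ample statement is the dual, with $u^{-1}u = (\lambda,1,\lambda)$ and $I'' \subseteq I'$. I do not expect a real obstacle here; the only points to watch are the bookkeeping with $0$, invoking non-triviality so that every index of $I'$ (resp. $I''$) is realised by an actual element of $S$, and not losing the index-set inclusion $I' \subseteq I''$ (resp. $I'' \subseteq I'$) that accompanies the condition $1 \in H$.
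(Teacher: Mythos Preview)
Your approach is exactly the paper's: compute $uu^{-1}=(i,1,i)$ for a non-zero $u=(i,a,\lambda)\in S$ and read off the condition for membership in $S$. You are, in fact, more careful than the paper. The paper's converse simply asserts that if $aa^{-1}\in H$ then $ss^{-1}=(\lambda,aa^{-1},\lambda)\in S$, silently assuming that the first index $\lambda\in I'$ also lies in $I''$; you correctly isolate the extra requirement $I'\subseteq I''$ (and dually $I''\subseteq I'$ on the right) that is needed for $(i,1,i)$ to land in $I'\times H\times I''$. So your argument proves the sharper statement that $S$ is left ample in $B$ iff $1\in H$ and $I'\subseteq I''$, and you rightly flag that the index-set inclusion is not captured by the condition ``$H$ is a left ample submonoid of $G$'' alone.
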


\begin{proof}
If $S$ is left ample in $B$ then for any non-zero element $s=(\lambda,a,\mu) \in S$ we have $ss^{-1}=(\lambda,a,\mu) (\mu,a^{-1},\lambda) =(\lambda,1,\lambda)\in S$. This implies that $aa^{-1}=1\in H$ for all $a \in H$, whence $H$ is a left ample submonoid of the group $G$.\vspace{2pt}

Conversely if $H$ is a left ample submonoid of the group $G$ then for every $a\in H$ and its inverse $a^{-1}\in G$, $aa^{-1}\in H$. So $ss^{-1}=(\lambda,a,\mu)(\mu,a^{-1},\lambda)=(\lambda,aa^{-1},\lambda)\in S$, that is $S$ is left ample. A similar argument applies if \lq left ample' is replaced by \lq right ample'.    
\end{proof}

\textbf{Acknowledgment.}
\upshape The authors are thankful to Professor Valdis Laan and dr. Ülo Reimaa for their valuable comments on the earlier drafts of this article. This research has been supported by Estonian Science Foundation's grants PRG1204.

\vspace{4pt}

\end{document}